\documentclass[12pt]{article}
 
\usepackage{amssymb,amsmath}
\usepackage{amsthm,bbm}
\usepackage{hyperref}
\usepackage{parskip}

\newtheorem{theorem}{Theorem}[section]
\newtheorem{proposition}[theorem]{Proposition}

\theoremstyle{remark}
\newtheorem{remark}{Remark}

\usepackage{authblk}
 
\begin{document}
 \title{Well-posedness of stochastic reacting particle systems\\ with non-local and Lennard–Jones interactions}
\author[a]{Daniela Morale }
\author[a]{Giulia Rui }
\author[a]{Stefania Ugolini }

\affil[a]{\small Department of Mathematics, University of Milano, Italy\\ \texttt{Daniela.Morale,Giulia.Rui, Stefania.Ugolini@unimi.it}}

\maketitle

\begin{abstract}
We establish well-posedness results for systems of a finite number of stochastic particles driven by independent Brownian motions and subject to a strongly singular drift induced by a Lennard–Jones interaction. In addition to the pairwise force, the dynamics includes a nonlocal drift mediated by an environmental field, whose evolution is coupled to the particle configuration through a regularized empirical density. We then extend the analysis to a reaction model in which the switching (or killing) rate also depends on the field. An interlacing technique is considered for establishing the well-posedness of the full system. The model is motivated by the challenge to provide a stochastic microscopic description of the sulphation phenomenon in cultural heritage materials.
\end{abstract}
 
{\bf Keywords} : {SDEs, singular drift,  strong solutions, regularization approach, Lennard-Jones potential, Poisson random measure}

\smallskip
{\bf MSC (2020):} {60H10, 60H30,  60K35, 60J60, 60J70, 82C22, 82C31, 60G55}

\smallskip

\section{Introduction}

The aim of the paper is to investigate the well-posedness of a system consisting of finitely many interacting stochastic particles whose dynamics is governed by three main components: a strongly singular pairwise interaction, an additional nonlocal drift generated by an evolving environmental field that is itself influenced by the particle configuration, and a reaction mechanism that removes particles from the active dynamics, with a field-dependent hazard intensity.

More precisely, let $d>1$; for $x\in\mathbb R^d\setminus\{0\}$, we consider the Lennard--Jones type potential
\begin{equation}\label{eq:V_LJ}
  V(x) := \frac{A}{|x|^\alpha} - \frac{B}{|x|^\beta},
  \qquad A,B>0,\quad \alpha>\beta>0,
\end{equation}
and the associated force $F(x):=-\nabla V(x)$. Such a potential presents a strong repulsive singularity of $V$ at close range, preventing particle overlap, as well as a weak attractive interaction at intermediate distances.

Despite its very frequent use in applied modelling and molecular dynamics \cite{2005_Nedea_molecular_dynamics,Wales2024}, this combination of properties places the model largely outside classical well-posedness results for SDEs with singular drifts, typically based on local integrability assumptions. To the best of our knowledge, mathematical literature offers few results on this type of force, largely confined to modelling approaches \cite{2023_FLR,2016_MZCJ,2025_Mach2023_MRU,2025_JMMRU_Arxiv}. A cornerstone reference in that direction is the paper by \cite{KrylovRockner2005}, which, under deterministic initial conditions, provides non-explosion criteria in the special case of gradient drifts with potentials blowing up at the boundary of a set. See also \cite{2015bogachev}. Our analysis builds on the framework developed in \cite{MoraleRuiUgolini2025}.

Let $\left(\Omega,\mathcal{F},\mathbb F=(\mathcal{F}_{t})_{t\in [0,T]},\mathbb{P}\right)$ be a filtered probability space, fix a horizon $T>0$ and an integer $N\ge 2$, and let $\{(W_{t}^{1},..., W_{t}^{N})_{t\in[0,T]}\}$ be a family of independent $d$-dimensional $\mathbb F$-adapted Wiener processes. We consider the \emph{cemetery state} $\Delta$ and define the domain $D:=\mathbb R^d\cup\{\Delta\}$.
The state of the system is given by a stochastic process on the finite time horizon $[0,T]$
\begin{equation*}
 (X,H)= \left\{(X^i,H^i), i=1,...,N \right\}=\left\{\left(X^i_t,H^i_t \right)_{t\in[0,T]}, i=1,...,N \right\},
\end{equation*}
where $X^i_t\in D$ is the position of the $i$-th particle and $H^i_t\in\mathbb H:=\{0,1\}$ is its activity label. More precisely, let $\tau_i$ be the $\mathcal{F}_t$-stopping time describing the $i$-th particle reaction (killing) time. Before $\tau_i$ the particle is active, while after the random reaction time $\tau_i$ the particle changes its type and is removed from the dynamics, that is, for any $i=\,1\ldots, N$ and $t\in [0,T]$
\[
X_t^i\in\mathbb R^d, \ \text{for any }\, t<\tau_i,
\qquad
X_t^i\in \{\Delta\}, \ \text{for any } \,t\ge\tau_i.
\]
As a consequence, the activity process $H$ can be defined, for any $i=\,1\ldots, N$, as
\begin{equation*}
 H_t^i := \mathbbm{1}_{[\tau_i,\infty)}(t),\qquad t\in[0,T].
\end{equation*}
This process component labels all the particles: $H_t^i =0$ indicates that the $i$-th particle is still active and able to react, while $H_t^i = 1$ means that the $i$-th particle has already reacted and has been removed from the dynamics. Hence, for any $t\ge \tau_i$, the state of the $i$-th particle is $(X^i_t,H^i_t)=(\Delta,1).$ 

The spatial distribution of the active particles is described by the empirical measure $\nu_t$ and by a \emph{ mollified measure} or \emph{empirical regularized density}, defined as follows
\begin{align}
  \nu_t^N &:= \frac{1}{N}\sum_{i=1}^N \,\,\varepsilon_{\left(X_t^i, H_t^i\right)}(\cdot,\{0\}), & t\in[0,T] \nonumber\\
  u_N(t,x)&:=(K*\nu_t^N)(x),& (t,x)\in[0,T]\times\mathbb R^d, \label{eq:def_uN}
\end{align}
where $\varepsilon_x$ is the delta-Dirac measure localizing in $x\in \mathbb R^d$ and $K\in C_b^\infty(\mathbb R^d)$ is a smooth kernel. Note that for the convolution operator in \eqref{eq:def_uN}, we adopt the convention that the self-interaction of a particle is excluded. 

We then introduce an environmental concentration $c:[0,T]\times\mathbb R^d\to\mathbb R_+$ evolving through a mass-action law driven by the regularized density $u_N$. For $ \lambda>0$,
\begin{equation*}
\partial_t c(t,x) = -\lambda\,c(t,x)\,u_N(t,x),
\qquad (t,x)\in(0,T]\times\mathbb R^d,
 \end{equation*}
with nonnegative and bounded initial condition $c(0,x)=c_0(x)$, such that 
\begin{equation*}
0\le c_0(x)\le m_0(x), \qquad x\in\mathbb R^d,
\end{equation*}
where $m_0:\mathbb R^d\to(0,\infty)$ is a fixed bounded reference profile representing 
the total amount of material initially available at position $x\in \mathbb R^d$. Let $\overline{m}=\|m_0\|_\infty$. 
We assume that $m_0$ is uniformly positive, namely that there exists a constant $\overline{M}>0$ such that
\begin{equation}\label{eq:belowbound_m0}
  \overline{M}  \le m_0(x) \le \overline{m}, \qquad  x\in\mathbb R^d .
\end{equation}
\begin{remark}
The assumption that $m_0$ is uniformly positive on the whole space $\mathbb R^d$
is introduced for simplicity of notation and to avoid repeatedly restricting integrals to subsets of $\mathbb R^d$. 
All the results of the paper remain valid, with minimal and straightforward modifications, if $m_0$ is merely nonnegative, bounded, and supported on a set $S\subset\mathbb R^d$
of positive Lebesgue measure such that $m_0\ge M>0$ on $S$ (this includes the model in \cite{2025_JMMRU_Arxiv}). In that setting, the environmental interaction is understood to act only on $S$ by restricting the integrals to such domain. All subsequent results are unchanged, and the proofs require only this notational modification.
\end{remark}

The underlying field $c$ influences the particle dynamics from two different perspectives: on the one hand,
it biases the motion of active particles via a non-local drift. For $(x,h)\in D\times\mathbb H$ we introduce
\begin{equation}\label{eq:F_env}
  \overline{G}[c(t,\cdot)](x,h)
  :=
  \mathbbm{1}_{\{0\}}(h)\int_{B_R(x)}\frac{z-x}{|z-x|}
  \left[1-\frac{c(t,z)}{m_0(z)}\right]
  e^{-|z-x|}\,dz,
\end{equation}
where $R>0$ is a fixed interaction radius. The factor $\bigl(1-c/m_0\bigr)$ means that particles are led toward regions where the field has been depleted the most.

On the other hand, the field $c$ also affects the switching rate of particles from active to inactive. 
The reaction time $\tau_i$ is generated by a thinning construction with a state-dependent hazard intensity function
\begin{equation}\label{eq:stochastic intensity_i}
\lambda_i(t)=\widetilde{\lambda}\,c(t,X_t^i), \qquad \qquad \widetilde{\lambda}\ge 0, 
\end{equation}
giving rise to a cumulative hazard function $$\Lambda_i(t)=\widetilde{\lambda}\int_0^t c(s,X_s^i)\,ds.$$ Let $(\zeta_i)_{i=1}^N$ be i.i.d. random variables, $\zeta_i \sim \exp(1)$, independent of $(W^i)_{i=1}^N$. Hence, the killing times are expressed as follows
\begin{equation}\label{eq:tau_def}
\tau_i :=
\inf\left\{t\ge0:\ \int_0^t \lambda_i(s)\,ds \ge \zeta_i\right\}=\inf\left\{t\ge0:\ \Lambda_i(t) \ge \zeta_i\right\},
\end{equation}
with the convention $\inf\emptyset:=+\infty$.
If $\widetilde{\lambda}=0$, the dynamics is not affected by the reaction: the Brownian particles interact only pairwise, their movement is biased by the environment via the drift term \eqref{eq:F_env}, but particles are never removed from the dynamics.

The dichotomic process $H^i$ is therefore a counting process with a single jump and hazard intensity \eqref{eq:stochastic intensity_i}. This process can be expressed via a Poisson random measure $M(ds,dj,dz)$ on $(0,T]\times\{1,\dots,N\}\times \mathbb R_+$, independent of the family of Brownian motions $(W^i)_{i=1}^N$, with intensity measure
\[
ds\otimes \sum_{k=1}^N \varepsilon_k(dj)\otimes dz.
\]
Hence, the overall particle dynamics is described by the process $(X,H)$, modelling independent Brownian particles subject to mutual interactions, interactions with the environment, and a reaction mechanism that confines them in a cemetery state. For each $i\in N^*=\{1,\dots,N\}$, the dynamics reads:
\begin{equation}\label{eq:full_system1}
\begin{split}
  dX_t^i
  &= -\nabla V * \nu_t^N (X_t^i)\,dt 
   + \overline{G} [c(t,\cdot)](X_t^i,H_t^i)\,dt
   + \sigma\, \,dW_t^i, \qquad \qquad t <\tau_i;\\
X_t^i & \in \{\Delta\}, \hspace{9.4cm} t\in [\tau_i,T];\\
  H_t^i &= H_0^i+ \int_{(0,t]\times N^*\times \mathbb R_+}
\mathbbm 1_{\{i\}}(j)\,\mathbbm 1_{\{0\}}(H_{s^-}^i)\,
\mathbbm 1_{\left\{z\le \lambda_i(s)\right\}}
\,M(ds,dj,dz), \qquad t>0.
\end{split}
\end{equation} 
coupled with the evolution of the underlying field $c$
\begin{equation}\label{eq:full_system2}
 \partial_t c(t,x) = -\lambda\,c(t,x)\,u_N(t,x), \qquad (t,x) \in (0,T]\times \mathbb R^d.
\end{equation}
Note that since the Lennard--Jones potential $V$ is singular at the origin, we adopt the standard convention that self--interactions are excluded. More precisely, the convolution term in \eqref{eq:full_system1} is understood throughout the paper as 
\[
\nabla V * \nu_t^N(X_t^i)
:= \frac{1}{N}\sum_{\substack{j=1\\ j\neq i}}^N \nabla V\bigl(X_t^i - X_t^j\bigr).
\]
The need for rigorous existence and uniqueness results for the system \eqref{eq:full_system1}–\eqref{eq:full_system2} arises in recent mathematical research on stochastic models for sulphation processes relevant to cultural heritage; see
\cite{MaurelliMoraleUgolini2025,2025_Mach2023_MRU,MoraleRuiUgolini2025,2024_MTU}. Sulphation is one of the main chemical processes contributing to the degradation of porous materials, particularly of substrates commonly encountered in cultural heritage.
From a modelling viewpoint, the sulphation reaction combines random microscopic motion of reactive agents, short-range interactions, a dependence on the structure of the environment, and an irreversible transformation that progressively reduces the population of active agents
while producing gypsum. These features naturally motivate stochastic particle descriptions coupled with macroscopic concentration fields and incorporating state-dependent jumps to model the chemical reactions. 
In the sulphation model given by \eqref{eq:full_system1}-\eqref{eq:full_system2} particles represent sulphuric
acid molecules, while the underlying field $c$ models the concentration of calcium carbonate,
which is depleted through reactions with the acid particle density. The gypsum concentration is then described by
\[
g(t,x)=m_0(x)-c(t,x),
\]
where $m_0(x)$ denotes the total amount of material available at position $x$, which is conserved over time: calcium carbonate directly transforms into gypsum when corroded by the acid. The full coupled model \eqref{eq:full_system1}-\eqref{eq:full_system2} was first presented in \cite{2025_JMMRU_Arxiv}, where its spatio-temporal behaviour is investigated through numerical simulations, highlighting in particular how randomness introduces a level of heterogeneity in the progression of corrosion.

\medskip 

A substantial literature is devoted to the well-posedness of SDEs with singular drifts. In one dimension, a rather complete picture is available, including classifications of admissible singularities and sharp criteria for existence and uniqueness; see \cite{ChernyEngelbert2005}. In higher dimensions, however, the situation is substantially more delicate. Most existing results rely on integrability conditions, typically local $L^p$ assumptions with $p$ sufficiently large, or on structural conditions that allow one to control the generator and derive Krylov-type estimates \cite{GyongyMartinez2001,KinzebulatovSemenov2023,TakanobuYosida1985,YangZhang2023,Zhang2005,AlbeverioKondratievRockner2003}. 
In open domains, an important line of work concerns processes with drifts that become singular near the
boundary, leading to non-explosion and boundary non-attainment results; see, for instance,
\cite{KrylovRockner2005}. Alternative integrability regimes and approaches have also been investigated in \cite{BeckFlandoliGubinelliMaurelli2014,Nam2020}.
A related direction concerns interaction-driven singularities, such as Poisson or Coulomb
kernels. In this setting, regularization procedures and martingale methods have proved effective, leading to well-posedness results for both single-particle equations and interacting systems; see, e.g.,
\cite{GodinhoQuininao2015,LiuYang2016,LiuYang2019} and references therein.

 Despite its frequent use in applied modelling and molecular dynamics \cite{Wales_2024}, the Lennard--Jones interaction is less covered by existing general results, because its repulsive singularity is typically stronger than the regimes where standard integrability-based techniques apply directly. Recent results have relied on suitable regularizations and on the control of the infinitesimal generator of the processes \cite{MoraleRuiUgolini2025,KrylovRockner2005}. In \cite{MoraleRuiUgolini2025}, we obtained a well-posedness result for SDEs with a dominant singular gradient drift generated by a potential diverging at the singular set, but without either the coupling with the underlying field or the switching mechanism, meaning that particles do not undergo killing.
 We prove that there exists a unique global strong solution since there is no blow-up in finite times. An important feature of our approach is that no restrictions on the kernel parameters are required. By contrast, in \cite{2025_GM}, the authors propose a different study, in which they consider a McKean–Vlasov SDE whose drift is given solely by a functional of the Lennard–Jones potential. In the last cited work, suitable integrability properties of the kernel are imposed in order to establish the well-posedness of the associated PDE and to derive limiting results via a mesoscale regularization. Consequently, the well-posedness in that setting relies on specific integrability conditions and on restrictions on the range of the Lennard–Jones potential parameters.

The analysis discussed here relies on the core result in \cite{MoraleRuiUgolini2025}, which is the following theorem.

\smallskip

 \begin{theorem}{\rm{\cite[Theorem~2.3]{MoraleRuiUgolini2025}}}\label{eq:teo_master}
 Let $\left(\Omega,\mathcal{F},\mathbb F=(\mathcal{F}_{t})_{t\in [0,T]},\mathbb{P}\right)$be a filtered probability space with a fixed time horizon
 $T>0$  and let $W$ be an $m$--dimensional $\mathbb F$-adapted Wiener process. 
 Let us consider the following SDE, for any $t\in [0,T]$
\begin{equation}\label{eq:Rui_SDE}
  dX_t = \bigl[-\nabla\Phi(X_t)+\mu(X_t)\bigr]\,dt + \sigma(X_t)\,dW_t.
\end{equation}
Let $U\subset\mathbb R^d$ be an open set. Suppose that the following conditions on the functions in \eqref{eq:Rui_SDE} hold.
 
\begin{itemize}
\item[$H_1$.] The function $\Phi \in \mathcal{C}^2(U)$ and is bounded from below;
\item[$H_2$.] $\Phi(x)\to + \infty$ as $d(x,\partial U)\to 0$;
\item[$H_3$.] there exists a positive constant \( \eta<\infty \) such that, for all \( x \in U \):
  $$
    - \left|\nabla\Phi(x)\right|^2 + \frac{1}{2} \operatorname{Tr}\left[ \sigma(x)^\top \nabla^2 \Phi(x)\, \sigma(x) \right] + \mu(x) \cdot \nabla\Phi(x)\leq \eta <\infty, $$   
\item[$H_4$.] $\mu$ is locally Lipschitz on $U$ and $\sigma$ is bounded and Lipschitz.
\end{itemize}
Furthermore, suppose that a proper cut-off of $ \Phi$ is provided, that is given any $\varepsilon > 0$ and introducing the truncated domain
\[
U_\varepsilon := \left\{x \in U : d(x, \partial U)> \, \varepsilon \right\}, \qquad U_\varepsilon^c := U \setminus U_\varepsilon,
\]
there exist two globally Lipschitz regularizations $\Phi^\varepsilon \in C^2(\mathbb{R}^d, \mathbb{R})$ and $\mu^\varepsilon \in C(\mathbb{R}^d, \mathbb{R}^d)$ such that
\begin{itemize}
\item[$R_1$.] $\Phi^\varepsilon(x)=\Phi(x)$, for any $x\in U_\varepsilon$;
\item[$R_2$.] there exists a $C>0$ such that $\left|\Phi^\varepsilon(x)\right| \leq C \left|\Phi(x)\right|$, for any $x \in U_\varepsilon^c$ 
\item[$R_3$.] $\nabla \Phi^\varepsilon$ and $\mu^\varepsilon$ are  globally Lipschitz with almost linear growth.
\end{itemize}

If $X_0$ is $\mathcal{F}_0$--measurable with $\mathbb E\bigl[\lvert X_0\rvert^2\bigr]<\infty$ and $\mathbb E\bigl[\Phi(X_0)\bigr]<\infty$,
then \eqref{eq:Rui_SDE} admits a pathwise unique strong solution on any finite time interval $[0,T]$.
Moreover, the boundary $\partial U$ is almost surely not attained in finite time.
 \end{theorem}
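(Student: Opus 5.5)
The proof follows the classical localization scheme of Khasminskii type, with $\Phi$ serving as the Lyapunov function. First, for each $\varepsilon>0$ we consider the regularized SDE
\[
dX^\varepsilon_t=\bigl[-\nabla\Phi^\varepsilon(X^\varepsilon_t)+\mu^\varepsilon(X^\varepsilon_t)\bigr]dt+\sigma(X^\varepsilon_t)\,dW_t .
\]
By $R_3$ and $H_4$ its coefficients are globally Lipschitz with at most linear growth. Since $\mathbb E|X_0|^2<\infty$, the classical Itô theory gives a pathwise unique strong solution on $[0,T]$. Here $\mu^\varepsilon$ is chosen to coincide with $\mu$ on $U_\varepsilon$, or at least on a bounded part of it. We then set $\theta_\varepsilon:=\inf\{t:\ X^\varepsilon_t\notin U_\varepsilon\}$. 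On $[0,\theta_\varepsilon]$ the coefficients of the regularized and original equations agree, so pathwise uniqueness yields consistency: for $\varepsilon'<\varepsilon$ we have $X^{\varepsilon'}=X^\varepsilon$ on $[0,\theta_\varepsilon]$ a.s., and $\theta_\varepsilon\le\theta_{\varepsilon'}$. We may therefore define a single process $X_t:=X^\varepsilon_t$ for $t\le\theta_\varepsilon$. It solves \eqref{eq:Rui_SDE} up to $\theta:=\lim_{\varepsilon\to0}\theta_\varepsilon$ and is pathwise unique there. If $U$ is unbounded, we also intersect with balls $\{|x|<n\}$ and use the linear growth to prevent explosion to infinity by a standard second-moment bound.

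The key step is to show $\mathbb P(\theta_\varepsilon\le T)\to0$. We apply Itô's formula to $\Phi(X_{t\wedge\theta_\varepsilon})$, which is legitimate since $\Phi\in C^2$ on $\overline{U_\varepsilon}$ by $H_1$. This gives
\[
\Phi(X_{t\wedge\theta_\varepsilon})=\Phi(X_0)+\int_0^{t\wedge\theta_\varepsilon}\Bigl(-|\nabla\Phi|^2+\mu\cdot\nabla\Phi+\tfrac12\operatorname{Tr}[\sigma^\top\nabla^2\Phi\,\sigma]\Bigr)(X_s)\,ds+M_{t\wedge\theta_\varepsilon},
\]
where $M$ is a stochastic integral. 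Its integrand $\nabla\Phi\,\sigma$ is bounded on the stopped region, because $\sigma$ is bounded and $\nabla\Phi$ is bounded on $U_\varepsilon$ intersected with a ball; hence the stopped martingale has zero mean. By $H_3$ the drift integrand is at most $\eta$, so
\[
\mathbb E\,\Phi(X_{t\wedge\theta_\varepsilon})\le\mathbb E\,\Phi(X_0)+\eta T<\infty .
\]
Let $\Phi_{\min}:=\inf\Phi$, which is finite by $H_1$. On the event $\{\theta_\varepsilon\le T\}$ the process sits on $\partial U_\varepsilon$, where $\Phi\ge m_\varepsilon:=\inf_{d(x,\partial U)=\varepsilon}\Phi$. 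Then Markov's inequality, applied to $\Phi-\Phi_{\min}\ge0$, gives
\[
\mathbb P(\theta_\varepsilon\le T)\le\frac{\mathbb E\,\Phi(X_0)+\eta T-\Phi_{\min}}{m_\varepsilon-\Phi_{\min}} .
\]
By $H_2$ we have $m_\varepsilon\to+\infty$, so the right-hand side tends to $0$. Therefore $\theta>T$ a.s. Since $T$ is arbitrary, $X$ is a global pathwise unique strong solution and $\partial U$ is not reached in finite time.

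The main obstacle is the justification in this second step. We must ensure the stopped local martingale is a true martingale, which requires localizing additionally at large $|x|$ with an exit time $\rho_n$ and then letting $n\to\infty$ using the moment bound. We also need the region where the regularized and original coefficients agree to be exactly the stopped region. Condition $R_2$ is used only to keep $\Phi^\varepsilon$ comparable to $\Phi$ near the boundary, giving uniform control on the regularized dynamics if one prefers to run the Itô argument directly on $X^\varepsilon$. Condition $R_1$ is what makes the consistency and patching argument work.
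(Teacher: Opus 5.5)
Your proposal is correct and follows the same route the paper relies on through the cited result. You solve the globally Lipschitz regularized equations, use $R_1$ and pathwise uniqueness to make the solutions consistent up to the exit times from $U_\varepsilon$, use $H_3$ as the Lyapunov bound $L\Phi\le\eta$ together with Markov's inequality and $H_2$ to get $\mathbb P(\theta_\varepsilon\le T)\to 0$, and conclude that almost surely the regularized and singular dynamics coincide on $[0,T]$. One small slip: define $m_\varepsilon$ as the infimum of $\Phi$ over $\{x\in U:\ d(x,\partial U)\le\varepsilon\}$ rather than over the level set $\{d(x,\partial U)=\varepsilon\}$, since initial data already within $\varepsilon$ of $\partial U$ give $\theta_\varepsilon=0$ with $X_{\theta_\varepsilon}=X_0$ not on $\partial U_\varepsilon$; $H_2$ still sends this infimum to $+\infty$.
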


The key ingredient in Theorem~\ref{eq:teo_master} is the control of the evolution operators of the equation to ensure the regularity of the solution, combined with a regularization and limiting procedure that guarantee that the particle paths almost surely avoid the boundary of the domain $U$. This framework generalizes to a broader class of singular drift terms the result in \cite{LiuYang2016}, extending it beyond the repulsive Coulomb interaction case.

\smallskip

\begin{remark}[$H_3$ as a Lyapunov condition.]
 Note that inequality $H_3$ implies that   $ L\Phi(x) \le \eta, $ where \(L\) is the infinitesimal generator associated to the SDE. 
\end{remark}

In the present work, we adopt the same analytical framework as in \cite{MoraleRuiUgolini2025}, but we extend the well-posedness results to the complete system \eqref{eq:full_system1}-\eqref{eq:full_system2}, which includes both the interaction with a
dynamically evolving environmental field and a switching (killing) mechanism. The proof proceeds in two main steps. First, we establish existence and pathwise uniqueness for the system without switching for any fixed number of particles. Then, we construct a global strong solution for the full system by means of an interlacing procedure along the sequence of random time intervals between system reaction times.

\section{No reacting particles: an existence result}
The first step in the analysis of the full system \eqref{eq:full_system1}-\eqref{eq:full_system2} 
is to establish existence and pathwise uniqueness for a finite number of interacting particles 
coupled with an environmental field, in the absence of any reaction or switching mechanism. 
We therefore begin by considering the hybrid system \eqref{eq:full_system1}-\eqref{eq:full_system2} 
without any killing rate: the parameter $\widetilde{\lambda}=0$ and all Brownian particles remain
active for any $t\in [0,T].$

Let $ N\ge 2$ be fixed. System \eqref{eq:full_system1}-\eqref{eq:full_system2} 
reduces, for any $t\in [0,T]$, to 
\begin{equation}\label{eq:SDE_LJcg_nojumps} 
\begin{split}
dX_t^i &=
-\nabla V * \widetilde{\nu}_t^N (X_t^i)\,dt
+ \widetilde{G}[c(t,\cdot)](X_t^i)\,dt
+ \sigma\,dW_t^i, \qquad i=1,\dots,N,\\[0.35cm]
\partial_t c(t,x) & = -\lambda\,c(t,x)\,u_N(t,x),
\hspace{5.3cm}\qquad x\in \mathbb R^d,\\
\end{split}
\end{equation}
with initial conditions given by an $F_0$-measurable random vector $X_0=(X_0^1,\dots,X_0^N)$ for the $N$ particles and
 $c(0,x)=c_0(x)$, such that $0\le c_0(x)\le m_0(x)$, for the environment field. 
 Again, $V$ is the Lennard--Jones potential 
 \eqref{eq:V_LJ} while the environmental drift is $\widetilde{G}[c(t,\cdot)](x) := \overline{G}[c(t,\cdot)](x,0)$ with $\overline{G}$ as in \eqref{eq:F_env}.
 
 The measure $\widetilde{\nu}_t^N$ in \eqref{eq:SDE_LJcg_nojumps} is a counting measure rescaled by $\widetilde{N} \ge N$, defined as 
\begin{equation}\label{eq:nu_tilde}
\widetilde{\nu}_t^N:=\frac{1}{\widetilde{N}}\sum_{i=1}^N\varepsilon_{X_t^i}, \quad t\in [0,T]
\end{equation}
 In the case $\widetilde{N}= N$, the measure $\widetilde{\nu}_t^N$ is the classical empirical measure, which is a true probability. Consequently, one may define the regularized measure as $\widetilde{u}^N(t,\cdot)= K*\widetilde{\nu}^N_t$, but for simplicity of notation, we still denote it by $u_N$ as in \eqref{eq:SDE_LJcg_nojumps}. 
 
 The distinction will be crucial in Section 3, where the number of active particles decreases over time due to reactions.

\smallskip
While the Lennard-Jones force term is strongly singular at the origin, the nonlocal drift term is sufficiently regular.
\medskip

\begin{proposition}[Environmental regularity]\label{prop:regularity_G}
Let $d\ge 1$ and let $m_0:\mathbb R^d\to(0,\infty)$ be defined as in \eqref{eq:belowbound_m0} and set $B_{m_0}(\mathbb R^d):=\{v:\mathbb R^d\to\mathbb R_+ \;:\; 0\le v(x)\le m_0(x)\}.$

For $v\in B_{m_0}(\mathbb R^d)$, we define the vector field
\begin{equation*}
\widetilde G[v](x)
:=\int_{B_R(x)} \frac{z-x}{|z-x|}
\left(1-\frac{v(z)}{m_0(z)}\right)e^{-|z-x|}\,dz ,
\qquad x\in\mathbb R^d.
\end{equation*}
Then there exist constants $C,L_1,L_2<\infty$, depending only on $(d,R)$, such that for all $v,v'\in B_{m_0}(\mathbb R^d)$ and all
$x,y\in\mathbb R^d$:
\begin{align}
|\widetilde G[v](x)| &\le C, \label{eq:G_bounded}\\
|\widetilde G[v](x)-\widetilde G[v'](x)|
&\le L_1\,\|v-v'\|_{L^\infty(\mathbb R^d)}, \label{eq:G_Lipschitz_v}\\
|\widetilde G[v](x)-\widetilde G[v](y)|
&\le L_2\,|x-y|. \label{eq:G_Lipschitz_x}
\end{align}
In particular, the map $x\mapsto \widetilde G[v](x)$ is globally Lipschitz, uniformly in
$v\in B_{m_0}(\mathbb R^d)$.
\end{proposition}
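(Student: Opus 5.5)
The plan is to treat $\widetilde G[v]$ as a convolution-type integral against the fixed vector kernel
\[
k_R(w):=\frac{w}{|w|}\,e^{-|w|}\,\mathbbm 1_{B_R(0)}(w),\qquad w\neq 0 .
\]
For $v\in B_{m_0}(\mathbb R^d)$ set $f_v:=1-v/m_0$. Then
\[
\widetilde G[v](x)=\int_{\mathbb R^d} f_v(z)\,k_R(z-x)\,dz .
\]
Two facts drive everything. First, $0\le f_v\le 1$ pointwise, because $0\le v\le m_0$. Second, $|k_R|\le \mathbbm 1_{B_R(0)}$. We only use that $v$ is bounded and measurable, with no continuity assumed. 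So all regularity must come from the kernel, and we must never shift the argument of $v$.

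\textbf{Boundedness \eqref{eq:G_bounded}.} This is immediate:
\[
|\widetilde G[v](x)|\le \int_{B_R(x)} e^{-|z-x|}\,dz\le |B_R|=:C,
\]
and $C$ depends only on $(d,R)$.

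\textbf{Lipschitz dependence on the field \eqref{eq:G_Lipschitz_v}.} By linearity in $f_v$,
\[
|\widetilde G[v](x)-\widetilde G[v'](x)|\le \int_{B_R(x)}\frac{|v(z)-v'(z)|}{m_0(z)}\,dz\le \frac{|B_R|}{\overline M}\,\|v-v'\|_{L^\infty} .
\]
Here the lower bound \eqref{eq:belowbound_m0} enters. Hence $L_1=|B_R|/\overline M$, which strictly speaking also depends on $\overline M$; I would note this in the statement.

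\textbf{Lipschitz dependence on the position \eqref{eq:G_Lipschitz_x}, the main step.} Put $h=x-y$. Keeping $z$ as the integration variable, so that $v$ is never translated, and using $0\le f_v\le 1$,
\[
|\widetilde G[v](x)-\widetilde G[v](y)|\le\int_{\mathbb R^d}\bigl|k_R(z-x)-k_R(z-y)\bigr|\,dz=\|k_R(\cdot+h)-k_R\|_{L^1}.
\]
This reduces the claim to the $L^1$-modulus of translation of $k_R$, which is uniform in $v$. The standard bound for functions of bounded variation gives
\[
\|k_R(\cdot+h)-k_R\|_{L^1}\le |h|\,|Dk_R|(\mathbb R^d),
\]
so it suffices to show $k_R\in BV(\mathbb R^d;\mathbb R^d)$ and set $L_2:=|Dk_R|(\mathbb R^d)$.

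This is the step I expect to be the obstacle, because $k_R$ has two discontinuities: the direction field $w/|w|$ at the origin, and the cutoff jump across $\partial B_R$. I would handle them as follows.
\begin{itemize}
\item On $B_R\setminus\{0\}$ the kernel is smooth, with $|\nabla k_R(w)|\le C_d\bigl(|w|^{-1}+1\bigr)$. This is integrable on $B_R$ when $d\ge2$, and for $d\ge2$ the point $\{0\}$ carries no variation.
\item For $d=1$, $k_R$ is a bounded piecewise-$C^1$ function with a jump of size $2$ at the origin, which contributes a finite amount.
\item The jump across $\partial B_R$ has size $e^{-R}$, so it contributes $e^{-R}\mathcal H^{d-1}(\partial B_R)$.
\end{itemize}
Summing these gives $|Dk_R|(\mathbb R^d)<\infty$, with a bound depending only on $(d,R)$.

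To make the BV translation estimate rigorous, I would first mollify $k_R$ and use $\|g(\cdot+h)-g\|_{L^1}\le |h|\,\|\nabla g\|_{L^1}$ for smooth $g$. I would then pass to the limit, using the lower semicontinuity of the total variation together with $L^1$ convergence of the mollifications.

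Since the bound on $\|k_R(\cdot+h)-k_R\|_{L^1}$ does not involve $v$, the map $x\mapsto\widetilde G[v](x)$ is globally Lipschitz uniformly in $v\in B_{m_0}(\mathbb R^d)$.
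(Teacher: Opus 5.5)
Your proposal is essentially the paper's proof. The bounds for \eqref{eq:G_bounded} and \eqref{eq:G_Lipschitz_v} are identical, and you are right that $L_1=|B_R|/\overline M$ also depends on $\overline M$, as the paper's own proof shows. For \eqref{eq:G_Lipschitz_x} you make the same reduction to the $L^1$ translation modulus of the truncated kernel, controlled by a BV translation estimate. The only difference is that you compute $|Dk_R|(\mathbb R^d)$ for the product directly, rather than splitting it as the paper does into $\|\phi-\tau_h\phi\|_{L^1}+\|\mathbbm 1_{B_R(0)}-\tau_h\mathbbm 1_{B_R(0)}\|_{L^1}$.

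One small fix is needed in the limit passage, because lower semicontinuity of the total variation gives the inequality in the wrong direction. What you need is the upper bound $\|\nabla(\rho_\delta*k_R)\|_{L^1}\le|Dk_R|(\mathbb R^d)$ for a mollifier $\rho_\delta$, which follows immediately from $\nabla(\rho_\delta*k_R)=\rho_\delta*Dk_R$. Equivalently, you can use the strict convergence from Evans--Gariepy that the paper invokes.
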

\begin{proof}
Let $ f(x):=1-{v(x)}/{m_0(x)}, $ for any $x\in \mathbb R^d$, and $ \phi(x):=\frac{x}{|x|}e^{-|x|}, $ for any $x\in \mathbb R^d\setminus \{0\}.$ Then $|f(x)|\le 1$ and $|\phi(w)| \le 1$.

Given $v\in B_{m_0}(\mathbb R^d)$, this trivially gives the uniform bound \eqref{eq:G_bounded}
\[
\left|\widetilde G[v](x)\right| \le  |B_R(0)| =: C.
\]

For any $v,v'\in B_{m_0}(\mathbb R^d)$,
\[
|\widetilde G[v](x)-\widetilde G[v'](x)|
\le \int_{B_R(x)} \frac{|v(z)-v'(z)|}{m_0(z)} e^{-|z-x|}\,dz
\le \frac{|B_R(0)|}{\overline{M}}\,\|v-v'\|_{L^\infty},
\]
which yields the Lipschitz continuity with respect to $v$ property \eqref{eq:G_Lipschitz_v} with $L_1=|B_R|/\overline{M}$. 

For any $x,h \in \mathbb R^d$, let us define the translation of $\tau_h g(x)=g(x+h)$. Then, for any $v\in B_{m_0}$ and $x,y\in \mathbb R^d$
\begin{equation}\label{eq:proof_L_x}
\begin{split}
\left| \widetilde G[v](x)-\widetilde G[v](y)\right| 
& =
\int_{\mathbb R^d}
f(z)\Big((\mathbbm{1}_{B_R(0)}\phi)(z-x)-\tau_h(\mathbbm{1}_{B_R(0)}\phi)(z-x)\Big)\,dz\\
&\le
\|\mathbbm{1}_{B_R(0)}\phi-\tau_h(\mathbbm{1}_{B_R(0)}\phi)\|_{L^1(\mathbb R^d)}\\
&\le
\|\phi-\tau_h\phi\|_{L^1(\mathbb R^d)}
+
\|\mathbbm{1}_{B_R(0)}-\tau_h \mathbbm{1}_{B_R(0)}\|_{L^1}.
\end{split}
\end{equation}

In order to obtain \eqref{eq:G_Lipschitz_x}, it therefore suffices to prove
\[
\|\phi-\tau_h\phi\|_{L^1(\mathbb R^d)}
\le C|h|, \qquad \|\mathbbm{1}_{B_R(0)}-\tau_h \mathbbm{1}_{B_R(0)}\|_{L^1}\le C|h|.
\]

\medskip
\noindent

If $d\ge2$, a direct computation shows $\nabla\phi\in L^1(\mathbb R^d)$, hence $\phi\in W^{1,1}(\mathbb R^d)$. Standard translation estimates for $W^{1,1}$ functions give
\begin{equation}\label{est:Step1}
  \|\phi-\tau_h\phi\|_{L^1(\mathbb R^d)}
\le |h|\,\|\nabla\phi\|_{L^1}.
\end{equation}

If $d=1$, the function $\phi(x)=\mathrm{sign}(x)e^{-|x|}$ satisfies by direct computation
\[
\|\phi-\tau_h\phi\|_{L^1(\mathbb R)}
\le C|h|.
\]

Since $B=B_R(0)$ is smooth and bounded, then
\[
\mathbbm{1}_{B_R(0)}\in BV(\mathbb R^d),
\qquad
|D\mathbbm{1}_{B_R(0)}|(\mathbb R^d)
=
\mathcal H^{d-1}(\partial B)
=
|\mathbb S^{d-1}|R^{d-1},
\]
where $D \mathbbm{1}_{B_R(0)}$ is intended as a distributional derivative \cite{EvansGariepy}. Moreover, there exists a sequence
$f_k\in C^\infty(\mathbb R^d)\cap BV(\mathbb R^d)$ such that
\begin{equation}\label{eq:limit_evans}
  f_k\to \mathbbm{1}_{B_R(0)},
\qquad
|Df_k|(\mathbb R^d)\to |D \mathbbm{1}_{B_R(0)}|(\mathbb R^d), \qquad\text{in } L^1(\mathbb R^d),
\end{equation}
see \cite[Theorem 5.3]{EvansGariepy}. Now, for each smooth $f_k$ and $d\geq 2$, we can apply estimate \eqref{est:Step1} and obtain
\[
\|\tau_h f_k-f_k\|_{L^1}\le |h|\int_{\mathbb R^d}|\nabla f_k|\,dz.
\]
A similar estimate holds for $d=1$, by replacing $\nabla f_k$ with $f'_k$ and using the fundamental theorem of calculus in place of Step 1.

Moreover, for smooth functions $|Df_k|(\mathbb R^d)=\int|\nabla f_k|\,dx$ (\cite{EvansGariepy}, Example for $W^{1,1}$ functions).
Thus
\[
\|\tau_h f_k-f_k\|_{L^1}\le |h|\,|Df_k|(\mathbb R^d).
\]
Passing to the limit $k\to\infty$ using \eqref{eq:limit_evans} yields
\begin{equation}\label{eq:BVshift}
\|\tau_h  \mathbbm{1}_{B_R(0)}- \mathbbm{1}_{B_R(0)}\|_{L^1}\le |h|\,|D \mathbbm{1}_{B_R(0)}|(\mathbb R^d).
\end{equation}
From \eqref{eq:proof_L_x}, \eqref{est:Step1}, and \eqref{eq:BVshift}, estimate \eqref{eq:G_Lipschitz_x} is achieved.

\end{proof}
\begin{remark}
If $v$ is globally Lipschitz and $m_0$ is Lipschitz and uniformly positive, then the proof of Proposition \ref{prop:regularity_G} simplifies considerably. Indeed, in this case the environmental drift $\widetilde G[v]$ is globally Lipschitz in $x$ by a direct differentiation argument,
and estimates \eqref{eq:G_bounded}–\eqref{eq:G_Lipschitz_x}
follow immediately from standard convolution bounds.
\end{remark}

\medskip

The following theorem establishes the well-posedness of the particle system without reaction. It follows the framework developed by the same authors in Theorem  \ref{eq:teo_master} (see \cite{MoraleRuiUgolini2025}) for systems with dominant singular gradient drift only. We extend the result in \cite[Theorem~3.3]{MoraleRuiUgolini2025}, where the specific case of Lennard-Jones is considered. In particular, we provide a suitable modification of the proof to include  the bounded environmental interaction and the coupling with the field $c$. Note that in \cite{MoraleRuiUgolini2025}, the measure \eqref{eq:nu_tilde} is defined with $\widetilde{N}=N$; however, this does not affect the proof, since all the estimates always concern an arbitrary finite number of particles. 

\medskip

\begin{theorem}[Well-posedness of the $\widetilde N$-particle system]\label{thm:finite_wp}
Let $d\ge1$, $\widetilde N\ge2$, and $T>0$.
Assume that the initial condition $X_0=(X_0^1,\dots,X_0^N)$ is $\mathcal{F}_0$-measurable and satisfies
\begin{equation}\label{eq:init_finite}
\mathbb E\bigl[|X_0|^2\bigr]<\infty,
\qquad \qquad
\mathbb E\!\left[
 V(X_0^i-X_0^j)
\right]<\infty, \quad 1\le i<j\le N.
\end{equation}
Let $c_0:\mathbb R^d\to\mathbb R_+$ be measurable, bounded and such that $c_0\in L^2(\mathbb R^d)$.
Then the system \eqref{eq:SDE_LJcg_nojumps} admits a pathwise unique strong solution on $[0,T]$.

Moreover, with probability one, no collisions between particles occur on $[0,T]$, i.e.
\[
\mathbb P\Bigl(
\exists\,t\in[0,T],\ \exists\,i\neq j
\ \text{such that } X_t^i=X_t^j
\Bigr)=0.
\]
In particular, the singularity is almost surely never reached in finite time.
Finally, the environmental field $c$ remains nonnegative, uniformly bounded on
$[0,T]\times\mathbb R^d$ and $\mathcal{C}([0,T];L^2(\mathbb R^d))$.
\end{theorem}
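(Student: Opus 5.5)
The plan is to first eliminate the field equation by solving it explicitly, and then to treat the particle system as a single SDE in $\mathbb R^{dN}$ to which Theorem~\ref{eq:teo_master} applies.

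\textbf{Step 1: the field.} For a given continuous path $t\mapsto (X^1_t,\dots,X^N_t)$, the field equation is a linear ODE in $t$ for each fixed $x$. Its solution is
\[
c(t,x)=c_0(x)\exp\Bigl(-\lambda\int_0^t u_N(s,x)\,ds\Bigr).
\]
We first consider $K\ge 0$, the physically relevant case. If $K$ changes sign, we use $|u_N|\le \|K\|_\infty$ instead, so that $c\le c_0e^{\lambda\|K\|_\infty T}$. In either case:
\begin{itemize}
\item $c$ is nonnegative and uniformly bounded;
\item $|c(t,x)|\le C c_0(x)$, so $c(t,\cdot)\in L^2$;
\item continuity of $t\mapsto c(t,\cdot)$ in $L^2$ follows from dominated convergence.
\end{itemize}
Moreover, $c(t,\cdot)$ depends on the path only through $\int_0^t u_N\,ds$. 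Using $|e^{-a}-e^{-b}|\le e^{\max(-a,-b)}|a-b|$ and the fact that $K$ is Lipschitz, we get
\[
\|c(t,\cdot)-c'(t,\cdot)\|_{L^\infty}\le C\int_0^t \max_i |X^i_s-Y^i_s|\,ds.
\]
The image need not lie in $B_{m_0}$ when $K$ is not nonnegative. Since $\overline G$ only uses the factor $(1-c/m_0)$, which stays bounded, Proposition~\ref{prop:regularity_G} extends verbatim with modified constants.

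\textbf{Step 2: the closed particle equation.} Substituting Step 1 into the drift gives, for each $i$, an environmental drift $\widetilde G[c^X(t,\cdot)](X^i_t)$ that is functional in the path. By Proposition~\ref{prop:regularity_G} and the bound from Step 1, this drift is bounded by $C$. It is also Lipschitz in the following path sense:
\[
\bigl|\widetilde G[c^X(t)](x)-\widetilde G[c^Y(t)](y)\bigr|\le L_2|x-y|+L_1C\int_0^t\sup_{r\le s}|X_r-Y_r|\,ds.
\]
Set $U=\{x\in\mathbb R^{dN}: x^i\ne x^j \text{ for } i\ne j\}$ and
\[
\Phi(x)=\frac{1}{2\widetilde N}\sum_{i\ne j}V(x^i-x^j),
\]
so that $-\nabla_{x^i}\Phi=-\nabla V*\widetilde\nu^N(x^i)$. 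Hypotheses $H_1$–$H_2$ and the cut-off conditions $R_1$–$R_3$ for this $\Phi$ were verified in \cite[Theorem~3.3]{MoraleRuiUgolini2025}; the factor $1/\widetilde N$ instead of $1/N$ only rescales constants. For $H_3$ the only new term is $\mu\cdot\nabla\Phi$ with $|\mu|\le C$. We bound it by
\[
C|\nabla\Phi|\le \tfrac12|\nabla\Phi|^2+\tfrac{C^2}{2},
\]
so it is absorbed by the good term $-|\nabla\Phi|^2$, and the Lyapunov estimate of \cite{MoraleRuiUgolini2025} survives.

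\textbf{Main obstacle.} Theorem~\ref{eq:teo_master} is stated for Markovian drifts $\mu(x)$, whereas our $\mu$ is time-dependent and path-dependent through $c$. I would rerun its proof rather than apply it verbatim.

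\emph{Regularized problem.} Take $\Phi^\varepsilon$ from \cite{MoraleRuiUgolini2025}. The equation with drift $-\nabla\Phi^\varepsilon+\widetilde G[c^X]$ has globally Lipschitz coefficients in the functional sense above, so a standard Picard/Gronwall argument gives a unique strong solution $X^\varepsilon$.

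\emph{Removing the cut-off.} Let $\tau_\varepsilon$ be the exit time from $U_\varepsilon$. Solutions for $\varepsilon'<\varepsilon$ agree up to $\tau_\varepsilon$ by pathwise uniqueness, which is again Gronwall using the functional Lipschitz bound. Itô's formula for $\Phi(X_{t\wedge\tau_\varepsilon})$ gives
\[
\mathbb E\,\Phi(X_{t\wedge\tau_\varepsilon})\le \mathbb E\,\Phi(X_0)+\eta T.
\]
Here the expectation of the initial potential is finite by \eqref{eq:init_finite}. Since $\Phi$ is bounded below and $\Phi\ge \inf_{\partial U_\varepsilon}\Phi\to\infty$ on $\partial U_\varepsilon$, we get $\mathbb P(\tau_\varepsilon\le T)\to0$.

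\emph{Conclusion.} Hence $X$ is defined globally on $[0,T]$ without collisions, and it is unique. The field $c$ is then recovered from Step 1 with the stated properties. The delicate point is making sure the time-dependence of $\mu$ does not spoil $H_3$. Since our bound on $\mu$ is uniform in $t$ and in the path, I expect this to be routine.
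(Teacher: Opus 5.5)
Your proposal is correct and follows essentially the same route as the paper. Both solve the field equation explicitly via the exponential formula, reduce to a path-dependent SDE with Lipschitz coefficients after regularization, verify $H_3$ from the boundedness of $\widetilde G$ together with the Lennard--Jones generator estimate of \cite{MoraleRuiUgolini2025}, and remove the cut-off through $\mathbb P(\tau_\varepsilon\le T)\to 0$. The remaining differences are minor: you absorb $\mu\cdot\nabla\Phi$ by Young's inequality (legitimate since that estimate holds for any $\sigma$) instead of termwise into $\overline{C}(1+|F_{ij}|)$, and you rerun the It\^o/exit-time argument for the time- and path-dependent $\mu$ rather than invoking Theorem~\ref{eq:teo_master} verbatim as the paper does somewhat loosely, which makes your version if anything more careful.
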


\begin{proof}

The proof follows two main steps: the study of the well-posedness of a regularized system and the extension of the result to the original system.

\emph{Smoothing the interaction and well-posedness of the regularized system.} 
In order to apply Theorem \eqref{eq:teo_master} and extend the result in \cite[Theorem~3.3]{MoraleRuiUgolini2025} to the present setting we first introduce a regularized version of the system by smoothing the Lennard--Jones potential near the singularity at the origin. Passing then to the limit ensures existence of a pathwise unique global strong solution \cite{MoraleRuiUgolini2025,2025_GM,2015_HaurayJabin}.

Fix $\varepsilon>0$ and let $V_\varepsilon\in C^2(\mathbb R^d)$ be such that
\[
V_\varepsilon(x)=V(x)\quad\text{for all }x\in \mathbb R^d\setminus B_\varepsilon(0),
\qquad
\|\nabla^2 V_\varepsilon\|_{L^\infty(\mathbb R^d)}<\infty.
\]
In particular, both $V_\varepsilon$ and $\nabla V_\varepsilon$ are globally Lipschitz
functions on $\mathbb R^d$ \cite{MoraleRuiUgolini2025}.

We denote by $X^{i,\varepsilon}$ the position of the $i$-th particle evolving according to the regularized system, by $\widetilde{\nu}^{\widetilde N,\varepsilon}$ the associated empirical measure, and by
$u_{\widetilde N,\varepsilon}$ the corresponding mollified density. The environmental field has the explicit representation
\begin{equation}\label{eq:c_epsilon}
c_\varepsilon(t,x) =
c_0(x)\exp\!\Bigl(- \lambda\int_0^t u_{\widetilde N,\varepsilon}(s,x)\,ds\Bigr)
= \mathcal{D}\!\left(c_0(x),\int_0^t u_{\widetilde N,\varepsilon}(s,x)\,ds\right),
\end{equation}
for all $(t,x)\in[0,T]\times\mathbb R^d$, where
$\mathcal{D}(a_1,a_2):=a_1\exp(-\lambda a_2)$ for $a_1,a_2\in\mathbb R_+$.

Equation \eqref{eq:c_epsilon} ensures uniform boundedness and regularity of the underlying field, namely that $0\le c_\varepsilon\le \|m_0\|_\infty$ and $c^\varepsilon \in H^1(0,T; L^2(\Omega\times \mathbb R^d))$.
On the other hand, the explicit representation of $c_\varepsilon$ allows us to reduce the coupled system \eqref{eq:SDE_LJcg_nojumps} to a closed equation for the particle process alone.
More precisely, for any $t\in[0,T]$ and $i=1,\dots,\widetilde N$, the particle dynamics can be written as the following path-dependent SDE:
\begin{equation}\label{eq:SDE_jumps_path_depependent} 
\begin{split}
dX_t^{i,\varepsilon} &=\left[-\nabla V_\varepsilon * \widetilde{\nu}_t^{\widetilde N,\varepsilon} + \widetilde{G}\left[\mathcal{D}\left(c_0(\cdot), \int_0^t u_{\widetilde N,\varepsilon}(s,\cdot) ds\right) \right] \right](X_t^{i,\varepsilon})\,dt 
+ \sigma\,dW_t^i.
\end{split}
\end{equation}

Equation~\eqref{eq:SDE_jumps_path_depependent} is path-dependent through the nonlocal drift, which depends on the past trajectory via the time integral of $u_{\widetilde N,\varepsilon}$. Nevertheless, this formulation is convenient for proving strong existence and pathwise
uniqueness of the regularized system
$\bigl(X_t^{i,\varepsilon},\, i=1,\ldots,N,\, c_t^\varepsilon\bigr)$, since it allows the use of a contraction argument; see, e.g., \cite{2025_morale_tarquini_ugolini,FlandoliLeocata2019}.

Fix $\varepsilon>0$ and let $T'<T$. Define the Banach space $(E, \| \cdot \|_E) $
\[
E:=L^2_P\!\left(\Omega;C\!\left([0,T'];\mathbb R^{\widetilde Nd}\right)\right),
\qquad
\|Y\|_E:=\mathbb E\!\left[\sup_{t\in[0,T']} \|X_t\|_{\mathbb R^{\widetilde Nd}}\right].
\]

We define a map $\Psi:E\to E$ componentwise as follows: for $i=1,\ldots \widetilde N$ and $t\in[0,T']$,
\[
[\Psi^i(Y)]_t:=
X_0^i
+\int_0^t \left[-\nabla V_\varepsilon * \widetilde{\nu}_t^{\widetilde N,\varepsilon} + \widetilde{G}\left[\mathcal{D}\left(c_0(\cdot), \int_0^t u_{\widetilde N,\varepsilon}(s,\cdot) ds\right) \right] \right](Y^i_t) \,ds
+\sigma\,W_t^i.
\]

By construction, $\nabla V_\varepsilon$ is globally Lipschitz and bounded, while the
environmental drift $\widetilde G$ is bounded and Lipschitz by
Proposition~\ref{prop:regularity_G}. As a consequence, $\Psi(E)\subset E$.
Moreover, standard estimates give that for any $Y,Y'\in E$,
\[
\|\Psi(Y)-\Psi(Y')\|_E \le C_{T'}\,\|Y-Y'\|_E,
\]
where $C_{T'}\to 0$ as $T'\downarrow 0$.
Choosing $T'>0$ sufficiently small so that $C_{T'}<1$, the map $\Psi$ is a contraction on
$E$.
By the Banach-Cacciopoli fixed point theorem, there exists a unique fixed point $X_\varepsilon \in E$ such that $\Psi(X_\varepsilon)=X_\varepsilon$, yielding a unique strong solution on $[0,T']$. 
The solution can then be extended to the whole interval $[0,T]$ by iteration on successive subintervals $[kT',(k+1)T']$, using $X_{kT'}^\varepsilon$ as initial condition, which is \(L^2\)-integrable and
\(\mathcal{F}_{kT'}\)-measurable. Since the Lipschitz constants are uniform in time, the argument can be iterated for a finite number of times, yielding existence and uniqueness of a strong solution on the whole interval $ [0,T]$.

\emph{Well-posedness of the unsmoothed system.} 
Note that in the framework of \ref{eq:teo_master}, the open set $U\subset \mathbb R^{\widetilde Nd}$ is the set of non-overlapping configurations
\begin{equation*}
  U := \mathbb{R}^{\widetilde Nd} \backslash \left\{\bigcup_{1\leq j<k\leq \widetilde N} \left\{x = (x^{(1)},...,x^{(\widetilde N)})\in \mathbb{R}^{\widetilde Nd} : x^{(j)}= x^{(k)} \right\}\right\}.
\end{equation*} 
Furthermore, the particle system 
\begin{equation*} 
dX_t^{i} =\left[-\nabla V * \widetilde{\nu}_t^{\widetilde N} + \widetilde{G}\left[ \mathcal{D}\left(c_0(\cdot), \int_0^t u_{\widetilde N}(s,\cdot) ds\right) \right] \right](X_t^{i})\,dt 
+ \sigma\,dW_t^i
\end{equation*}
can be rewritten as an SDE on $\mathbb R^{\widetilde Nd}$ as well as in system \eqref{eq:Rui_SDE}, with drift in the form $-\nabla\Phi+\mu$, where the total interaction potential $\Phi$ is given by
\[
\Phi(x)=\frac{1}{\widetilde{N}}\sum_{1\le i<j\le \widetilde N}V(x^i-x^j), \qquad x=(x^1,\dots,x^{\widetilde N}),
\]
and the environmental drift $\mu$ is defined componentwise for any $i=1,\ldots,\widetilde N$ by $$\mu_i(x)= \widetilde{G}\left[\mathcal{D}\left(c_0(\cdot), \int_0^t u_{\widetilde N,\varepsilon}(s,\cdot) ds\right) \right](x^i).$$ 
For convenience, for any $i,j=1,\ldots,\widetilde N, i\not= j$, we set the notations $V_{ij}:=V(x^i-x^j)$ and $F_{i,j}=F(x^i-x^j) := \nabla V (x^i-x^j)$.

To apply Theorem~\ref{eq:teo_master}, it remains to verify the crucial condition $H_3$.
The required generator estimate is established in \cite[Theorem~3.3]{MoraleRuiUgolini2025}
for the pure Lennard--Jones system. In particular, one has
\begin{equation*}
|\nabla\Phi(x)|^2-\frac{\sigma^2}{2}\Delta\Phi(x)
\ge
\sum_{i<j}\left(
\frac{1}{\widetilde N^2}|F_{ij}|^2
- C_N\bigl(1+|F_{ij}|\bigr)
-\frac{\sigma^2}{2\widetilde N}\Delta V_{ij}
\right).
\end{equation*}
In our setting, the additional term $\mu\cdot\nabla\Phi$ is controlled by the boundedness of the environmental drift. More precisely, using \eqref{eq:G_bounded} we obtain
\begin{equation*}
  |\nabla\Phi(x)|^2 - \frac{\sigma^2}{2}\Delta\Phi(x) - \mu(x)\cdot \nabla\Phi(x) \geq \sum_{i <j} \left(\frac{1}{\widetilde{N}^2} F_{ij}^2 - \overline{C}\left(1+|F_{ij}|\right) - \frac{\sigma^2}{2\widetilde{N}} \Delta V_{ij}\right).
\end{equation*}
 
The behaviour of each term in the r.h.s. of the above inequality near the singularity at the origin, where \( |x^i - x^j| \to 0 \) is exactly the same as the one appearing in the proof of \eqref{eq:teo_master}: the singularity of the squared Lennard--Jones force dominates all other contributions, implying that the l.h.s. of the inequality is bounded from below. In particular, each term in the sum is positive in a neighbourhood of the singularity, because of the dominance of the squared force, while away from the singular set all terms are trivially bounded
\cite{MoraleRuiUgolini2025}.

 From Theorem \eqref{eq:teo_master} the thesis is achieved. Indeed, let $X^\varepsilon$ denote the unique strong solution of the regularized system and introduce the exit time
\[
\tau_\varepsilon
:=
\inf\bigl\{t\in[0,T]:\ X_t^\varepsilon\notin U_\varepsilon\bigr\}.
\]
As a consequence of Theorem \ref{eq:teo_master}, as shown in \cite[Theorem~2.3]{MoraleRuiUgolini2025}, the singular set is almost surely not
attained in finite time, in the sense that it is possible to prove that
\[
\lim_{\varepsilon\downarrow 0}\mathbb P(\tau_\varepsilon\le T)=0.
\]
For completeness, we briefly explain why this non-attainability property implies the desired result.

In fact, the above property means that there exists a set $A\subset\Omega$ of full probability
such that for every $\omega\in A$ one can find $\varepsilon_0(\omega)>0$
with $\tau_\varepsilon(\omega)\ge T$ for all $\varepsilon\le\varepsilon_0(\omega)$.
On this event, the trajectory $X^\varepsilon(\omega)$ remains in $U_\varepsilon$
throughout $[0,T]$, and therefore the regularized coefficients coincide with the
original (singular) ones on the whole time interval.

It follows that, for all $t\in[0,T]$ and $\varepsilon$ small enough, $X^\varepsilon_t(\omega)$ and $X_t(\omega)$ satisfy the same equation in $[0,\tau_\varepsilon]$
\begin{equation*}
X_t^\varepsilon(\omega) = X_0^\varepsilon(\omega) - \int_0^t \left( \nabla\Phi(X_t^\varepsilon(\omega)) + \mu(X_t^\varepsilon(\omega))\right) dt + \int_0^t \sigma(X_t^\varepsilon) dW_t(\omega).
\end{equation*}
Since the solution is pathwise unique, then $X_t=X^\varepsilon_t$ in $[0,T]\subseteq [0,\tau_\varepsilon]$.

In conclusion, almost surely (i.e. for any $\omega \in A$), there exists the limit of $X_t^\varepsilon$ as $\varepsilon \downarrow 0$, say that $X_t$ is a solution of \eqref{eq:SDE_LJcg_nojumps}, and pathwise
uniqueness follows from the uniqueness of the regularized problems. 

This concludes the proof.
\end{proof}

\bigskip
\section{Well-posedness of the full system with particle removal}

Now we discuss the main result, that is the construction of a global strong solution to the full coupled system \eqref{eq:full_system1}–\eqref{eq:full_system2}.
The proof relies on an interlacing procedure, originally introduced by \cite{Applebaum} for Lévy processes and subsequently used successfully in more general contexts; see, e.g., \cite{HamblyJettkant2025,XiZhu2017}. 
This approach allows for a pathwise construction of a unique solution, obtained by concatenating strong solutions of the diffusion system between consecutive random reaction times.

In the present framework, each particle can undergo at most one reaction, after which it is sent to the cemetery state and no longer contributes to the dynamics. Then the interlacing procedure terminates after finitely many steps, yielding a global pathwise strong solution to the full system.

\smallskip

\begin{theorem}[Global well-posedness with particle removal]
Let $d\ge1$, $N\ge2$, and $T>0$.
Assume that the initial particle configuration $X_0=(X_0^1,\dots,X_0^N)$ satisfies the conditions \eqref{eq:init_finite}
and that the initial field $c_0$ is measurable, nonnegative, bounded and it belongs to
$L^2(\mathbb R^d)$.
Then the coupled system \eqref{eq:full_system1}–\eqref{eq:full_system2} admits a pathwise unique
global strong solution on $[0,T]$.
\end{theorem}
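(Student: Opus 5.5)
The proof goes by interlacing along the ordered reaction times. Set $T_0:=0$ and let $0<T_1\le T_2\le\dots$ denote the successive reaction times of the system, i.e. the order statistics of $\{\tau_i\}$ truncated at $T$. On each random interval $[T_k,T_{k+1})$ the set of active particles $A_k\subset N^*$ is fixed, with $|A_k|=N-k$. There the dynamics is exactly system \eqref{eq:SDE_LJcg_nojumps} with $N-k$ particles and rescaling $\widetilde N=N$ in \eqref{eq:nu_tilde}; this is why Theorem~\ref{thm:finite_wp} was stated for $\widetilde N\ge N$. The number of reactions is at most $N$, so the construction stops after finitely many steps.

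\emph{Step 1: the first interval.} Apply Theorem~\ref{thm:finite_wp} to the $N$ particles, with data $(X_0,c_0)$. This gives a pathwise unique, collision-free strong solution $(X^{(0)},c^{(0)})$ on $[0,T]$. Since $0\le c^{(0)}\le\overline m$ and $c^{(0)}$ is continuous in time, each $\Lambda_i^{(0)}(t)=\widetilde\lambda\int_0^t c^{(0)}(s,X^{(0),i}_s)\,ds$ is continuous, adapted and nondecreasing. Define
\[
\tau_i^{(0)}:=\inf\{t:\Lambda_i^{(0)}(t)\ge\zeta_i\},\qquad T_1:=\min_i\tau_i^{(0)}\wedge T .
\]
Equivalently, $T_1$ is the first atom of $M$ in $\{(s,j,z):z\le\widetilde\lambda c^{(0)}(s,X^{(0),j}_s)\}$, which gives the Poisson representation of $H$ in \eqref{eq:full_system1}. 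Because the $\zeta_i$ are independent with continuous laws, ties occur with probability zero, so a.s.\ exactly one particle $i_1$ is killed at $T_1$. We set $X^{i_1}_t=\Delta$ and $H^{i_1}_t=1$ for $t\ge T_1$.

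\emph{Step 2: induction.} On $\{T_k<T\}$, restart at the stopping time $T_k$. The new initial data are the $\mathcal F_{T_k}$-measurable positions of the surviving particles and the field $c(T_k,\cdot)$. Use the shifted Brownian motions $W_{T_k+\cdot}-W_{T_k}$, the shifted random measure $M$, and the residual thresholds $\zeta_i-\Lambda_i(T_k)$; by the strong Markov property these play the same role as the original data. To apply Theorem~\ref{thm:finite_wp} again we check its hypotheses:
\begin{itemize}
\item[(a)] $c(T_k,\cdot)$ is measurable and satisfies $0\le c(T_k,\cdot)\le c_0$, hence it is bounded and in $L^2$, since $c$ is non-increasing by \eqref{eq:c_epsilon};
\item[(b)] $\mathbb E|X_{T_k}|^2<\infty$, by standard moment bounds: the drift is $-\nabla\Phi$ plus a bounded term;
\item[(c)] $\mathbb E[V(X^i_{T_k}-X^j_{T_k})]<\infty$.
\end{itemize}

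Concatenation then defines $(X,H,c)$ on $[0,T_{k+1})$, and $c$ is continuous across $T_k$ because it solves an ODE in time for each fixed $x$.

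\emph{Step 3: uniqueness.} Argue interval by interval. Take two solutions. On $[0,T_1\wedge T_1')$ both solve \eqref{eq:SDE_LJcg_nojumps}, so they coincide by Theorem~\ref{thm:finite_wp}. Then their intensities coincide, so the same atoms of $M$ trigger the jumps and $T_1=T_1'$ with the same index. Iterating up to $N$ times gives pathwise uniqueness on $[0,T]$.

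\emph{Main obstacle.} The hard step is (c): restarting from a random, stopping-time-dependent configuration needs integrability of the potential. I would first try to avoid it altogether. Instead of restarting, solve the frozen-set system of Theorem~\ref{thm:finite_wp} on all of $[T_k,T]$ and localize with the Lyapunov function. By $H_3$ and the remark following Theorem~\ref{eq:teo_master}, $\Phi(X_t)-\eta t$ is a local supermartingale up to exit from $U_\varepsilon$. Optional stopping at $T_k\wedge\tau_\varepsilon$ then gives
\[
\mathbb E\,\Phi\bigl(X_{T_k\wedge\tau_\varepsilon}\bigr)\le\mathbb E\,\Phi(X_0)+\eta T .
\]
Letting $\varepsilon\to0$ and using Fatou yields $\mathbb E\,\Phi(X_{T_k})<\infty$. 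Removing one particle subtracts terms that are bounded below, since $V$ is bounded below. So the reduced potential still has finite expectation, up to bookkeeping of the negative parts of $V$. If finite expectation fails, the fallback is that Theorem~\ref{eq:teo_master} really only needs $\Phi(X_0)<\infty$ a.s., obtained by conditioning on $\mathcal F_{T_k}$. This holds because the configuration at $T_k$ lies in $U$ a.s.
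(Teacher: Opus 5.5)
Your proposal is essentially the paper's proof. Both use interlacing along the reaction times, apply Theorem~\ref{thm:finite_wp} (with rescaling $\widetilde N=N$) on each inter-reaction interval, stop after at most $N$ switches, and get uniqueness because two solutions coincide between switches and so accept the same atoms of $M$. The paper phrases this last step via the first disagreement time $\zeta$ and the compensation formula for $J_t$; your direct pathwise comparison is equally valid.

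Your checks (a)--(c) of the hypotheses of Theorem~\ref{thm:finite_wp} at the random restart times, notably the optional-stopping bound on $\mathbb E\,\Phi(X_{T_k})$, go beyond the paper, which restarts at $\tau(n)$ without verifying them. One correction in (b): a bounded perturbation of $-\nabla\Phi$ does not by itself give second moments, because $-x\cdot\nabla\Phi(x)\to+\infty$ at collisions. That step therefore also needs a Lyapunov-type bound on $\mathbb E\int_0^T|X^i_s-X^j_s|^{-\alpha}\,ds$, or your conditioning fallback.
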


\begin{proof}
The proof proceeds by an interlacing construction along the successive reaction times.
We first prove the existence of a strong solution by induction. We consider three main steps.

\smallskip
\noindent\emph{First switching time and particle selection.} For each particle $i\in N^*=\{1,\dots,N\}$, the killing times $\tau_i$  \eqref{eq:tau_def} can be written as the first acceptance time for particle $i$ by
\[
\tau_i:=\inf\Bigl\{t>0:\ \int_0^t\int_{N^*} \int_0^\infty 
\mathbbm 1_{\{i\}}(j)
\mathbbm 1_{\{H^i_{s-}=0\}}\mathbbm 1_{\{z\le \lambda_i({s-})\}}
\,M(ds,dj,dz)\ge 1\Bigr\},
\]
with the convention $\inf\emptyset:=+\infty$. Since $M$ is a Poisson random measure on $(0,\infty)\times N^*\times(0,\infty)$ 
whose intensity measure is absolutely continuous with respect to Lebesgue 
measure in the time variable, it is almost surely simple in the time coordinate. 
In particular, with probability one there do not exist two distinct atoms of $M$ having the same time component. Hence, if we define the first switching time of the discrete process $H$ by
\[
\tau(1):=\inf\{t>0:\ H_t\neq H_0\}=\min_{1\le i\le N}\tau_i\qquad\text{a.s.}.
\]
 Since $N^*$ is finite, it follows that almost surely the minimum $
\tau(1)$ 
is attained by a unique index. We denote this index by $i_1$, so that 
$\tau(1)=\tau_{i_1}$.

\smallskip

\noindent\emph{Evolution up to the first switch.}
On the interval $I_1 := [0,\tau(1))$ all particles are active, so that $H_t\equiv h_0:=H_0$, and the system is given by \eqref{eq:SDE_LJcg_nojumps}. By Theorem~\ref{thm:finite_wp}, on $[0,\tau{(1)})$ there exists a pathwise unique strong solution $(X^{h_0},H_0, c^{h_0}(t,\cdot))$. We therefore set
\[
(X_t,H_t, c(t,\cdot)) := \bigl(X^{h_0}(t),h_0,c^{h_0}(t,\cdot)\bigr),
\qquad t\in[0,\tau(1)).
\]
If $\tau(1)\ge T$, the construction stops. Otherwise, at time $\tau(1)$, the particle $i_1$ is removed from the dynamics and sent to the cemetery state, and we update the activity vector $h_0$, switching its $i_1$-th component to $1$:
\[
H_{\tau(1)}=h_1 = (h_0^1,\dots,h_0^{i_1-1},1,h_0^{i_1+1},\dots,h_0^N).
\]
The remaining $N-1$ particles retain their positions at time $\tau(1)$ and evolve according to the same dynamics, now interpreted as an $(N-1)$-particle system coupled with the field $c$, whose evolution is governed by \eqref{eq:full_system2} with the empirical measure formed from the surviving particles.
By Theorem~\ref{thm:finite_wp} 
the $(N-1)$-particle system admits a unique strong solution $(X^1_t,\dots,X_t^{i_1-1},X_t^{i_1+1},\dots,X_t^N)$ for $t\in [\tau(1),\tau(2))$ where $\tau(2)$ is the next reaction time. We follow this reasoning and proceed by induction.

\smallskip

\noindent\emph{Iteration.}
Assume that the solution $(X,H,c)$ has been constructed up to time $\tau(n)$ with $H_{\tau(n)}=h_n$. Starting from the state
\[
(X_{\tau(n)}, H_{\tau(n)},c(\tau(n),\cdot))
\]
Theorem~\ref{thm:finite_wp} ensures the existence of a unique strong solution corresponding to a system with $N-n$ active particles. Define $H$ on $(\tau(n),T]$ by the same thinning rule
(using the already-fixed random measure $M$) and define
\begin{equation}\label{def:taun_iterative}
  \tau(n+1):=\inf\{t>\tau(n):\ H_t\neq H_{\tau(n)}\}.
\end{equation}
If $\tau{(n+1)}<T$, let $i_{n+1}$ be the (a.s.\ unique) index that jumps at time $\tau(n+1)$, and set 
\[
H_{\tau(n+1)} := h_{n+1} = (h_n^1,\dots,h_n^{i_{n+1}-1},1,h_n^{i_{n+1}+1},\dots,h_n^N).
\]
On the interval $I_n := [\tau(n),\tau(n+1))$, we define $(X,H,c)$ to coincide with the solution corresponding to the fixed activity vector $h_n$
\[
(X_t,H_t, c(t,\cdot)) := \bigl(X^{h_n}(t),h_n,c^{h_n}(t,\cdot)\bigr),
\qquad t\in[\tau(n),\tau(n+1)).
\]

The process $H$ has at most $N$ jumps: the interlacing times $\tau(n)$ cannot accumulate before $T$, and the concatenated process is well-defined on the whole interval $[0,T]$ after finitely many steps. This completes the interlacing construction of a strong global solution.

\smallskip
In order to prove pathwise uniqueness, let $Z:=(X,H,c)$ and $\bar Z:= (\bar X,\bar H,\bar c)$ be two strong solutions of
\eqref{eq:full_system1}–\eqref{eq:full_system2} on $[0,T]$, defined on the same filtered probability
space, driven by the same Brownian motion $W$ and the same random measure
$M$, with a.s. identical initial data, $Z_0=\bar Z_0$.

\smallskip
Define the first disagreement time of the discrete components by
\[
\zeta := \inf\{t>0: H_t\neq \bar H_t\}\wedge T.
\]
By definition,
\begin{equation}\label{eq:H_agree_before_zeta}
H_t=\bar H_t,\qquad\text{for all }t<\zeta.
\end{equation}

\noindent\emph{Interlacing before $\zeta$.}
Let $\tau(n)$ be the jump times defined recursively in \eqref{def:taun_iterative}, with $\tau(0):=0$. 
 Since $H$ takes values in $\mathcal H=\{0,1\}^N$ and each coordinate can jump at most once, there are at most $N$ strict jumps.
For each $n$, on the interval $I_n:=[\tau(n),\tau(n+1))$ we have
\[
H_t\equiv h_n:=H(\tau(n)).
\]
Because of \eqref{eq:H_agree_before_zeta}, the same interlacing times describe $\widetilde H$ up to
$\zeta$, and for all $t<\zeta$ with $t\in I_n$,
\[
\bar H(t)=H(t)=h_n.
\]
Hence, on each interval $I_n\cap[0,\zeta)$, both $Z$ and $\bar Z$ solve the same subsystem with
constant activity vector $h_n$, driven by the same Brownian motions and with the same initial
conditions at time $\tau(n)$.
By pathwise uniqueness for the corresponding diffusion system
(Theorem~\ref{thm:finite_wp}), we obtain
\begin{equation}\label{eq:agree_before_zeta_marked}
Z(t)=\bar Z(t)\qquad\text{for all }t<\zeta,\ \text{a.s.}
\end{equation}

\noindent\emph{No disagreement of $H$ at $\zeta$.}
It remains to exclude the possibility of a disagreement of $H$ at time $\zeta$. Both jump processes are defined via the same random measure $M$, as in \eqref{eq:full_system1}, and at each switching time the jumping coordinate is determined by the unique accepted mark. For $i=1,\dots,N$, define the acceptance indicators
\[
I_i(s,j,z)
:=\mathbbm{1}_{\{j=i\}}\mathbbm{1}_{\{H_i(s-)=0\}}
\mathbbm{1}_{\{z\le\widetilde \lambda\,c(s,X_{s-}^i)\}},
\]
\[
\bar I_i(s,j,z)
:=\mathbbm{1}_{\{j=i\}}\mathbbm{1}_{\{\bar H_i(s-)=0\}}
\mathbbm{1}_{\{z\le\widetilde \lambda\,\bar c(s,\bar X_{s-}^i)\}}.
\]
Let
\[
J_t
:=
\sum_{i=1}^N
\int_{(0,t\wedge\zeta]}
\int_{N^*}
\int_0^\infty
\mathbbm{1}_{\{I_i(s,j,z)\neq \bar I_i(s,j,z)\}}
\,M(ds,dj,dz).
\]
The random variable $J_t$ counts the number of Poisson jumps up to time $t\wedge\zeta$ for which one
system accepts the mark while the other rejects it.

If $\zeta\le t$, then there exists a first Poisson mark at which the two systems differ, and hence
$J_t\ge1$. Therefore,
\[
\mathbbm{1}_{\{\zeta\le t\}}\le J_t
\qquad\Rightarrow\qquad
\mathbb P(\zeta\le t)\le \mathbb E[J_t].
\]

Since the integrand in the definition of $J_t$ is predictable, the compensation formula for random measures yields
\[
\mathbb E[J_t]
=
\mathbb E\!\left[
\sum_{i=1}^N
\int_0^{t\wedge\zeta}
\sum_{j=1}^N
\int_0^\infty
\mathbbm{1}_{\{I_i(s,j,z)\neq \bar I_i(s,j,z)\}}
\,dz\,ds
\right].
\]

For all $s<\zeta$, by \eqref{eq:H_agree_before_zeta} and \eqref{eq:agree_before_zeta_marked} we have that almost surely
$H(s-)=\bar H(s-), X_{s-}=\bar X_{s-}, c(s,\cdot)=\bar c(s,\cdot).$ 
Hence, for all $i,j,z$, $I_i(s,j,z)=\bar I_i(s,j,z)$, {a.s.}
and the integrand vanishes identically.
Consequently, for all $t\in[0,T]$ we get $\mathbb E[J_t]=0$, and then
\[
\mathbb P(\zeta\le t)=0.
\]

We conclude that $\mathbb P(\zeta<T)=0$, so $H=\bar H$ almost surely on $[0,T]$.
Together with \eqref{eq:agree_before_zeta_marked}, this implies
\[
(X,H,c)=(\bar X,\bar H,\bar c)
\quad\text{a.s.\ on }[0,T],
\]
which proves pathwise uniqueness of the global strong solution to
\eqref{eq:full_system1}–\eqref{eq:full_system2}.
\end{proof}
 
\section*{Acknowledgments}
The authors are members of GNAMPA (Gruppo Nazionale per l’Analisi Matematica, la Probabilità e le loro Applicazioni) of the Italian Istituto Nazionale di Alta Matematica (INdAM).

\bibliography{Bibliography}

\end{document}